\theoremstyle{plain}
\newtheorem{theorem}{Theorem}[section]
\newtheorem{corollary}[theorem]{Corollary}
\newtheorem{lemma}[theorem]{Lemma}
\newtheorem{proposition}[theorem]{Proposition}
\theoremstyle{definition}
\newtheorem*{claim*}{Claim}
\newcommand{\R}{\mathbb{R}}
\newcommand{\N}{\mathbb{N}}
\newcommand{\Z}{\mathbb{Z}}
\newcommand{\Complement}[1]{\overline{#1}}
\newcommand{\Dual}[1]{{#1}^*}
\newcommand{\Opt}{\textsc{Opt}}
\newcommand{\Fractional}[1]{{ #1 }_\mathrm{f}}
\newcommand{\LPDual}[1]{ \textcolor{violet} { \textcolor{black}{#1}^* } }
\newcommand{\HDual} [1]{ \textcolor{mygreen}  { \textcolor{black}{#1}^* } }
\newcommand{\LPComplement}  [1]{ \textcolor{red}  { \overline{ \textcolor{black}{#1} }  } }
\newcommand{\HComplement}   [1]{ \textcolor{cyan}    { \overline{ \textcolor{black}{#1} }  } }
\newcommand{\DComplement}   [1]{ \textcolor{orange}    { \overline{ \textcolor{black}{#1} }  } }
\newcommand{\LPDual}[1]{ {#1}^* }
\newcommand{\HDual} [1]{ {#1}^* }
\newcommand{\LPComplement}  [1]{ \overline{#1} }
\newcommand{\HComplement}   [1]{ \overline{#1} }
\newcommand{\DComplement}   [1]{ \overline{#1} }
\newcommand{\Define}[1]{\emph{#1}}
\newcommand{\InOpen}    [1] {{ #1 }^{\mathrm{in}}_{\mathrm{o}}}
\newcommand{\InClosed}  [1] {{ #1 }^{\mathrm{in}}_{\mathrm{c}}}
\newcommand{\OutOpen}   [1] {{ #1 }^{\mathrm{out}}_{\mathrm{o}}}
\newcommand{\OutClosed} [1] {{ #1 }^{\mathrm{out}}_{\mathrm{c}}}
\newcommand{\gammaIn}   {\gamma^{\mathrm{in}}}
\newcommand{\gammaOut}  {\gamma^{\mathrm{out}}}
\newcommand{\GammaIn}   {\Gamma^{\mathrm{in}}}
\newcommand{\GammaOut}  {\Gamma^{\mathrm{out}}}
\newcommand{\IP}[1]{{ #1 }^\mathbb{Z}}
\def\arraystretch{1.5}
\colorlet{mygreen}{green!50!black}
\title{Linear Programming Complementation}
\author{Maximilien Gadouleau\thanks{Department of Computer Science, Durham University, UK. Email: \texttt{m.r.gadouleau@durham.ac.uk}} 
\and George B. Mertzios\thanks{Department of Computer Science, Durham University, UK. Email: \texttt{george.mertzios@durham.ac.uk}} \thanks{Supported by the EPSRC grant EP/P020372/1.}
\and Viktor Zamaraev\thanks{Department of Computer Science, University of Liverpool, UK. Email: \texttt{viktor.zamaraev@liverpool.ac.uk}} \thanks{Supported by the EPSRC grant EP/P020372/1, during the time the author was at Durham University.}}
\date{\vspace{-0,3cm}}
\begin{document}

\maketitle

\begin{abstract}
In this paper we introduce a new operation for Linear Programming (LP), called \emph{LP complementation}, which resembles many properties of LP duality. 
Given a maximisation (resp.~minimisation) LP $P$, we define its \emph{complement} $Q$ as a specific minimisation (resp.~maximisation) LP which has the \emph{same} objective function as $P$. Our central result is the LP complementation theorem, that establishes the following relationship between the optimal value $\Opt(P)$ of $P$ and the optimal value $\Opt(Q)$ of its complement: $\frac{1}{\Opt(P)}+\frac{1}{\Opt(Q)}=1$. 
The LP complementation operation can be applied if and only if $P$ has an optimum value greater than 1. 

As it turns out, LP complementation has far-reaching implications. 
To illustrate this, we first apply LP complementation to \emph{hypergraphs}. 
For any hypergraph $H$, 
we review the four classical LPs, namely  
\emph{covering} $K(H)$, \emph{packing} $P(H)$, \emph{matching} $M(H)$, and \emph{transversal} $T(H)$. 
For every hypergraph $H=(V,E)$, we call $\HComplement{H}=(V,\{V\setminus e : e\in E\})$ the \emph{complement} of $H$. 
For each of the above four LPs, we relate the optimal values of the LP for the dual hypergraph $\HDual{H}$ to that of the complement hypergraph $\HComplement{H}$ (e.g.~$\frac{1}{ \Opt( K(\HDual{H}) ) }+\frac{1}{\Opt( K(\HComplement{H}) ) } = 1$).

As a second application of LP complementation, we consider \emph{fractional graph theory}.
We prove that the LP for the \Define{fractional in-dominating number} of a digraph $D$ is the complement of the LP for the \Define{fractional total out-dominating number} of the digraph complement $\DComplement{D}$ of $D$. 
Furthermore we apply the hypergraph complementation theorem to matroids. 
We establish that the fractional matching number of a matroid coincide with its edge toughness.
This result can then be applied to graphic matroids, yielding a formula for the edge toughness of a graph.
Moreover, we derive an alternative proof of the relationship between the edge toughness of a matroid and the fractional covering number of its dual matroid.

As our last application of LP complementation, we introduce the natural problem \textsc{Vertex Cover with Budget (VCB)}: for a graph $G=(V,E)$ and a positive integer $b$, what is the maximum number $t_b$ of vertex covers 
$S_1, \dots, S_{t_b}$ of $G$, such that every vertex $v\in V$ appears in at most $b$ vertex covers? 
The integer $b$ can be viewed as a ``budget'' that we can spend on each vertex and, given this budget, we aim to cover all edges for as long as possible. 
Note that $t_b\geq b$, as a trivial feasible solution to \textsc{VCB} is to select \emph{all} vertices in $V$ for $b$ consecutive times. That is, 
\textsc{VCB} is actually not a generalisation of \textsc{Minimum Vertex Cover}. 
We relate \textsc{VCB} with the LP $Q_G$ for the fractional chromatic number $\Fractional{\chi}$ of a graph $G$. 
More specifically, we prove that, as $b \to \infty$, the optimum for \textsc{VCB} satisfies $t_b \sim \Fractional{t} \cdot b$, where $\Fractional{t}$ is the optimal solution to the complement LP of $Q_G$.
Finally, our results imply that, for any finite budget $b$, it is NP-hard to decide whether $t_b\geq b + c$ for any $1 \le c \le b-1$. 

While our results about fractional domination and matroids are just special applications of LP complementation, they are quite general on their own as their special cases yield existing results from the literature.
This demonstrates the generality and power of LP complementation which we believe is yet to be fully understood.
\end{abstract}

\newpage 

\section{Introduction}

Many optimisation problems can be expressed as, or reduced to, Linear Programs (LPs) or Integer Programs (IPs) \cite{PS82}. As such, the use of Linear Programming is ubiquitous \cite{Sch03}, with applications in combinatorial optimisation, combinatorics, industrial engineering, coding theory, etc. One of the key aspects of Linear Programming is LP duality, and in particular the strong LP duality theorem which states that the optimal values of an LP is equal to that of its dual \cite{Sch86}.

Many classical problems from graph theory, e.g. maxmimum matching, minimum vertex cover, chromatic number, independence number, clique number, minimum dominating set, domatic number, etc. can be expressed as Integer Programs (IPs). Fractional graph theory then investigates these problems with three main approaches (see the book by Scheinerman and Ullman \cite{SU97} for a survey). First, it studies the Linear Programming (LP) relaxations of these IPs, some of which have found applications of their own, e.g fractional chromatic number or fractional domatic number for scheduling \cite{GH21, AELTW16}. Second, it applies LP techniques to either the original IP problems or their LP relaxations. Amongst those, LP duality is one of the most powerful and ubiquitous \cite{Sch86}. Third, it generalises the results to hypergraphs in order to get a clearer framework. In particular, hypergraph duality, where the roles of vertices and edges are swapped, is common practice.

\paragraph{Motivating example} We illustrate the main contributions of this paper, namely LP and hypergraph complementations, via a simple example first. Consider the following problem \textsc{Vertex Cover with Budget (VCB)}. Given a graph $G$ and a vertex budget $b \ge 1$, find the largest collection $S_1, \dots, S_{t_b}$ of vertex covers such that every vertex belongs to at most $b$ of the $S_i$'s. As $b$ tends to infinity, the optimum satisfies $t_b \sim \Fractional{t} \cdot b$, where $\Fractional{t}$ is defined as follows. Let $A$ be the incidence matrix of vertex covers of $G$, where $A_{ve} = 1$ if and only if $v$ belongs to the vertex cover $e$. Then $\Fractional{t} = \max\{1^\top x: Ax \le 1, x \ge 0 \}$. For instance, the pentagon $C_5$ has eleven vertex covers: all complements of non-edges and all sets of four or five vertices. We obtain
\[
A(C_5) = 
\begin{pmatrix}
    1 & 0 & 1 & 0 & 1 & 1 & 0 & 1 & 1 & 1 & 1 \\
    1 & 1 & 0 & 1 & 0 & 1 & 1 & 0 & 1 & 1 & 1 \\
    0 & 1 & 1 & 0 & 1 & 1 & 1 & 1 & 0 & 1 & 1 \\
    1 & 0 & 1 & 1 & 0 & 1 & 1 & 1 & 1 & 0 & 1 \\
    0 & 1 & 0 & 1 & 1 & 0 & 1 & 1 & 1 & 1 & 1 
\end{pmatrix},
\qquad 
\Fractional{t}(C_5) = \max\{1^\top x: Ax \le 1, x \ge 0 \} = \frac{5}{3}.
\]
In general, the $\Fractional{t}$ quantity has received very little attention. LP complementation then links it to the much more well studied fractional chromatic number of graphs \cite{SU97}.
A $c$-multicolouring of $G$ is the smallest size of a collection of independent sets $\Complement{S}_1, \dots, \Complement{S}_{\chi_c}$, such that each vertex belongs to at least $c$ of the $\Complement{S}_i$'s. As $c$ tends to infinity, the optimum satisfies $\chi_c \sim \Fractional{\chi} \cdot c$, where $\Fractional{\chi}$ is the fractional chromatic number of $G$. We have $\Fractional{\chi} = \min\{1^\top x: (1- A)x \ge 1, x \ge 0 \}$, where $1-A$ is the incidence matrix of independent sets of $G$. For instance, for $C_5$,
\[
1 - A(C_5) = 
\begin{pmatrix}
    0 & 1 & 0 & 1 & 0 & 0 & 1 & 0 & 0 & 0 & 0 \\
    0 & 0 & 1 & 0 & 1 & 0 & 0 & 1 & 0 & 0 & 0 \\
    1 & 0 & 0 & 1 & 0 & 0 & 0 & 0 & 1 & 0 & 0 \\
    0 & 1 & 0 & 0 & 1 & 0 & 0 & 0 & 0 & 1 & 0 \\
    1 & 0 & 1 & 0 & 0 & 1 & 0 & 0 & 0 & 0 & 0 
\end{pmatrix},  
\qquad
\Fractional{\chi}(C_5) = \min\{1^\top x: (1-A)x \ge 1, x \ge 0 \} = \frac{5}{2}.
\]

The relation between vertex covers and independent sets is an example of hypergraph complementation, and accordingly, the $\Fractional{t}$ and $\Fractional{\chi}$ terms are examples of LP complementation. We shall prove that these two values form a complement pair, i.e. 
$\frac{1}{ \Fractional{t} } + \frac{1}{ \Fractional{\chi} } = 1$, which is easily verified for $C_5$. Therefore, computing the fractional chromatic number immediately yields the asymptotic behaviour of the vertex cover with budget problem.

\subsection*{Our contributions}

\paragraph{Linear Programming Complementation.}
In this paper we introduce the notion of the \Define{complement} of an LP $R$, which we denote by $\LPComplement{R}$, as follows. Let $c \in \R^n$, $b \in \R^m$, $A \in \R^{m \times n}$, then for the following maximisation LP $P$, we have
\begin{align*}
    P &:~ \max \{ c^\top x : Ax \le b \},\\
    \LPComplement{P} &:~ \min \{ c^\top x : (b c^\top - A) x \ge b \}.
\end{align*}
Similarly, let $v \in \R^n$, $u \in \R^m$, $M \in \R^{m \times n}$, then for the following minimisation LP $Q$, we have
\begin{align*}
    Q &:~ \min \{ v^\top x : Mx \ge u \},\\
    \LPComplement{Q} &:~ \max \{ v^\top x : (u v^\top - M) x \le u \}.
\end{align*}

To simplify notation, in the remainder of the paper we use the notation $P$ (resp.~$Q$) to denote a maximisation (resp.~minimisation) LP, while we use $R$ to denote an arbitrary LP which can be either a maximisation or a minimisation LP. 
Furthermore, for any linear program $R$, adding the constraint that the variables have to be integral yields an integer program, which we denote $\IP{R}$.

\bigskip
\noindent
\textbf{LP complementation theorem}. Our central result is a surprising relation between the optimal values of an LP and its complement, given that one of these values is finite and larger than 1.

\begin{theorem}[LP complementation theorem] \label{th:LP_complementation}
For any LP $R$, $1 < \Opt(R) < \infty$ if and only if $1 < \Opt(\LPComplement{R}) < \infty$, in which case 
\[
	\frac{1}{\Opt(R)} + \frac{1}{ \Opt( \LPComplement{R} ) } = 1.
\]
\end{theorem}

\noindent
Alternatively, the theorem states that the harmonic mean of the optimal values of the LP and its complement is 2. Consequently, the two values are separated by 2, and one value is equal to 2 if and only if the other is equal to 2.


\bigskip
\noindent    
\textbf{Natural interpretation of LP complementation}.
The links between two-player zero-sum (matrix) games and LP are well established; see \cite{Vor77, Bri89} for instance. We shall review these and then show that LP complementation can be interpreted using two complementary games. 

Given any $m \times n$ matrix $A$, the matrix game $\Gamma_A$ with payoff matrix $A$ is played by two persons, Rose and Colin, as follows. Rose selects a row of $A$, Colin a column. If the row $i$ and the column $j$ are chosen, then Rose's payoff is $a_{ij}$. In particular, if $a_{ij} > 0$, then Rose earns money; otherwise, Rose loses money.  


A strategy for Colin is then a probability distribution on the columns: $c = (c_1, \dots, c_n)^\top$ such that $c \ge 0$ and $1^\top c = 1$. Rose's expected payoff for a given strategy $c$ for Colin is then $v_c = \max_i \{ A_i c \}$, where $A_i$ is the $i$-th row of $A_i$; thus $A c \le v_c \cdot 1$. Colin aims at minimising Rose's expected payoff. The value of the game, denoted as $V$, is the minimum expected of Rose's payoff over all strategies for Colin.

Without loss of generality, suppose that $0 \le A \le 1$. Then the value $V$ of the game is also between $0$ and $1$; let us omit the two extreme cases and suppose that $0 < V < 1$. For any strategy $c$ for Colin with payoff $v_c$, let $x = \frac{1}{ v_c } c$, then we have $x \ge 0$, $Ax \le 1$, and $1^\top x = \frac{1}{ v_c }$. Minimising Rose's expected payoff $v_c$ then corresponds to maximising $1^\top x$. We can then express $V =1/\Opt(P)$, where
\[
    P :~ \max \{ 1^\top x : Ax \le 1, x \ge 0 \}.
\]
LP duality then corresponds to taking Rose's point of view: $V = 1/\Opt(\LPDual{P})$, with
\[
    \LPDual{P} :~ \min \{ 1^\top y : A^\top y \ge 1, y \ge 0 \}.
\]


LP complementation, on the other hand, corresponds to taking the complementary payoff. Consider a second game, where the players change their roles (Rose chooses columns of the payoff matrix and Colin chooses rows), and the payoff is equal to $1$ minus the original payoff. Thus, the new payoff matrix is $(1 - A^\top)$ and the value of the new game is $\Complement{V} = 1 - V$. But then, we have $\Complement{V} = 1/\Opt(Q)$, where
\[
    Q = \LPComplement{P} :~ \min \{ 1^\top y : (1 - A) y \ge 1, y \ge 0 \}.
\]
We then have $\Opt(P) > 1$ and $\Opt( \LPComplement{P} ) > 1$ and
\[
	\frac{1}{\Opt(P)} +  \frac{1}{\Opt( \LPComplement{P} )} = 1.
\]

\bigskip  
\noindent
\textbf{Consequence for integer programming \& Bounds}. Let $P$ be a maxmisation LP. LP duality can be naturally used to study $P$, as any feasible solution to the dual $\LPDual{P}$ gives an upper bound on the optimal value of $P$. However, a feasible solution to the dual does not provide much information about feasible solutions of the primal. LP complementation works differently, as a feasible solution to the complement immediately yields a feasible solution to the primal by simple scaling. However, it only gives a lower bound on the optimal value. The primal and its complement then ``work together'' towards their optimal solutions and values.

The relationship between feasible solutions to the primal and the complement has some important consequences for IPs. Firstly, from $P$ and $\Complement{P}$, we obtain four programs $P_s$, $\IP{P_s}$, $(\LPComplement{P})_t$, and $\IP{ ( \LPComplement{P} )_t }$ (where $s$ and $t$ come from an optimal solution of $P$ and its value), which have a common optimal solution--see Corollary \ref{corollary:IP}.

Secondly, we introduce the bounds $\alpha(\IP{P})$ and $\beta(\IP{\LPComplement{P}})$ on the optimal values of $P$ and $\LPComplement{P}$, respectively. These bounds are based on feasible solutions of $\IP{P}$ and $\IP{\LPComplement{P}}$, respectively. We then prove that these bounds are ``mutually tight'' for the primal-complement pair--they are actually tight for the vertex cover with budget problem on $C_5$.

\begin{theorem} \label{theorem:alpha1}
Let $P : \max\{c^\top x : Ax \le b\}$, where $b > 0$ and $A \ne 0$, such that $1 < \Opt(\IP{P}) \le \Opt( \IP{\LPComplement{P}} ) < \infty$. Then $\Opt( \IP{P} ) \le \alpha(\IP{P}) \le \Opt( P )$, $\Opt( \LPComplement{P} ) \le \beta( \IP{ \LPComplement{P} } ) \le \Opt( \IP{ \LPComplement{P} } )$ and
\[
    \Opt(P) = \alpha(\IP{P}) \iff \Opt(\LPComplement{P}) = \beta(\IP{ \LPComplement{P} }).
\]
\end{theorem}

\bigskip
\noindent    
\textbf{Hypergraph complementation.} 
For a hypergraph $H = (V, E)$ with $n$ vertices and $m$ edges, its \Define{incidence matrix} is denoted by $M_H \in \R^{n \times m}$. The \Define{dual} of the hypergraph $H$ is $\HDual{H} = (E, \Dual{V})$, where $\Dual{V} = \{ \{ e\in E: e \ni v\} : v \in V \}$. We then have $M_{\HDual{H}} = (M_H)^\top$ and $\HDual{(\HDual{H})} \cong H$. 

Now we define the \Define{complement} of $H$ as $\HComplement{H} = (V, \{V \setminus e : e \in E \})$; note that $M_{\HComplement{H}} = 1 - M_H$.
Hypergraph complementation is an involution that commutes with duality, i.e.~$\HComplement{\HComplement{H}} = H$ and ${\HComplement{ \left( \HDual{H} \right) } = \HDual{ \left( \HComplement{H} \right) }}$. 


A \Define{covering} of a hypergraph $H$ is a set of edges whose union is equal to its set of vertices $V$. The \Define{covering number} $k(H)$ of $H$ is the minimum size of a covering of $H$; this can be formulated as the optimum of an integer program. 
The \Define{fractional covering number} $\Fractional{k}(H)$ of $H$ is the optimal value of the LP $K(H)$  that is obtained by lifting the integrality constraints.

It can be easily shown that $\LPComplement{K( \HDual{H} )} = \LPDual{ K( \HComplement{H} ) }$. By applying the LP complementation theorem to $K(H)$, we obtain the hypergraph complementation theorem, as follows.

\begin{theorem}[Hypergraph complementation theorem] \label{theorem:hypergraph_complementation-small}
For any hypergraph $H$,
\[
	\frac{1}{\Fractional{k}(\HDual{H})} + \frac{1}{\Fractional{k}(\HComplement{H})} = 1.
\]
\end{theorem}

Applying LP duality and hypergraph duality to $K(H)$ yields four standard LPs $K(H), P(H), T(H), M(H)$ for hypergraphs, given in Table \ref{tab:standardLP} and related in Figure \ref{fig:8LPs-1} \cite{SU97}. By applying LP complementation to these four LPs, 
we obtain the four new LPs 
$\LPComplement{ K(H)}, \LPComplement{ P(H) }, \LPComplement{ T(H) }, \LPComplement{ M(H) }$. The new notions of LP complementation and hypergraph complementation allow us to establish a formal relation of these four LPs with the four original LPs; see Figure \ref{fig:8LPs-2} for an illustration. 

\begin{table}[H]
\centering
\begin{tabular}{|p{7.5cm}|p{7.5cm}|}
\hline
 \textbf{Covering number}, $\Fractional{k}(H)$ 
 \newline\medskip
 \footnotesize{min \# edges to cover all vertices} 
 \newline 
 \normalsize{$K(H) :~\min\{ 1^\top x :  M_H x \ge 1, x \ge 0\}$} & 
 \textbf{Packing number}, $\Fractional{p}(H)$ 
 \newline\medskip
 \footnotesize{max \# vertices, no two in the same edge}
 \newline 
 \normalsize{$P(H) :~\max	\{	1^\top x : 	M_H^\top x 	\le 1, x \ge 0\}$}\\ \hline
 \textbf{Transversal number}, $\Fractional{\tau}(H)$
  \newline\medskip
  \footnotesize{min \# vertices to touch edges} 
  \newline
  \normalsize{$T(H) :~\min	\{	1^\top x : M_H^\top x \ge 1, x \ge 0 \}$}
  &
 \textbf{Matching number}, $\Fractional{\mu}(H)$ 
  \newline\medskip
  \footnotesize{max \# pairwise disjoint edges} 
  \newline
  \normalsize{$M(H) :~\max \{	1^\top x : 	M_H x \le 1, x \ge 0\}$}
  \\ \hline
\end{tabular}
\caption{Four standard LPs for hypergraphs: covering, packing, transversal, and matching numbers of a hypergraph.} \label{tab:standardLP}
\end{table}




\begin{figure}[H]
\begin{center}
\begin{tikzpicture}[scale = 1.5, every node/.style={scale=0.8}]
    \draw[very thick, color=violet] (0,0) -- (2,0);
    \draw[very thick, color=mygreen] (0,0) -- (0,2);
    \draw[very thick, color=mygreen] (2,0) -- (2,2);
    \draw[very thick, color=violet] (0,2) -- (2,2);

    

    \node[anchor=east]  at (0, 0) {$K(H)$};
    \node[anchor=west]  at (2, 0) {$P(H) = \LPDual{ K(H) }$};  
    \node[anchor=east]  at (0,2) {$T(H) = K( \HDual{ H } )$};  
    \node[anchor=west]  at (2, 2) {$M(H) = \LPDual{ K( \HDual{H} ) }$};  
    
    \node[draw=none] at (0,0) {\textbullet};
    \node[draw=none] at (0,2) {\textbullet};
    \node[draw=none] at (2,0) {\textbullet};
    \node[draw=none] at (2,2) {\textbullet};

\end{tikzpicture}

\small{ \textcolor{violet}{LP duality} ~~ \textcolor{mygreen}{Hypergraph duality}}

\caption{The four initial Linear Programs related to a hypergraph.} \label{fig:8LPs-1}
\end{center}
\end{figure}

\begin{figure}[H]
\begin{center}
\begin{tikzpicture}[scale = 1.5, every node/.style={scale=0.8}]
    \draw[very thick, color=violet] (0,0) -- (2,0);
    \draw[very thick, color=mygreen] (0,0) -- (0,2);
    \draw[very thick, color=mygreen] (2,0) -- (2,2);
    \draw[very thick, color=violet] (0,2) -- (2,2);

    \draw[very thick, color=violet] (1,1) -- (3,1);
    \draw[very thick, color=mygreen] (1,1) -- (1,3);
    \draw[very thick, color=mygreen] (3,1) -- (3,3);
    \draw[very thick, color=violet] (1,3) -- (3,3);
    
    \draw[very thick, color=red] (0,0) -- (1,1);
    \draw[very thick, color=red] (2,0) -- (3,1);
   \draw[very thick, color=red] (0,2) -- (1,3);
    \draw[very thick, color=red] (2,2) -- (3,3);

    \node[anchor=east]  at (0, 0) {$K(H)$};  
    \node[anchor=west]  at (2, 0) {$P(H)$};  
    \node[anchor=east]  at (0,2) {$T(H)$};  
    \node[anchor=west]  at (2, 2) {$M(H)$};  
    \node[anchor=east]  at (1, 1) {$\LPComplement{ K(H) } = M(\HComplement{H})$};  
    \node[anchor=west]  at (3,1) {$\LPComplement{ P(H) } =  T(\HComplement{H})$};  
    \node[anchor=east]  at (1,3) {$\LPComplement{ T(H) } = P(\HComplement{H})$};  
    \node[anchor=west]  at (3, 3) {$\LPComplement{ M(H) } = K(\HComplement{H})$};  
    
    \node[draw=none] at (0,0) {\textbullet};
    \node[draw=none] at (0,2) {\textbullet};
    \node[draw=none] at (2,0) {\textbullet};
    \node[draw=none] at (2,2) {\textbullet};
    \node[draw=none] at (1,1) {\textbullet};
    \node[draw=none] at (1,3) {\textbullet};
    \node[draw=none] at (3,1) {\textbullet};
    \node[draw=none] at (3,3) {\textbullet};

\end{tikzpicture}

\small{ \textcolor{violet}{LP duality} ~~ \textcolor{red}{LP complementation} ~~ \textcolor{mygreen}{Hypergraph duality}   ~~ \textcolor{cyan}{Hypergraph complementation} }

\caption{The eight Linear Programs related to a hypergraph.} \label{fig:8LPs-2}
\end{center}
\end{figure}


The hypergraph complementation theorem then holds for all four parameters in Table \ref{tab:standardLP}.

\begin{corollary}
For any hypergraph $H$, we have
\[
	\frac{1}{ \Fractional{k}(\HDual{H}) } + \frac{1}{ \Fractional{k}(\HComplement{H}) } = 
	\frac{1}{ \Fractional{p}(\HDual{H}) } + \frac{1}{ \Fractional{p}(\HComplement{H}) } = 
	\frac{1}{ \Fractional{\mu}(\HDual{H}) } + \frac{1}{ \Fractional{\mu}(\HComplement{H}) } = 
	\frac{1}{ \Fractional{\tau}(\HDual{H}) } + \frac{1}{ \Fractional{\tau}(\HComplement{H}) } = 1.
\]
\end{corollary}

 


\bigskip
\noindent    
\textbf{The impact of LP complementation to related problems.} 
Here we give a brief overview of the implications that Linear Complementation has in the following two case studies. Full details are given in Sections~\ref{section:applications} and~\ref{section:budget}, respectively.

\paragraph{Case study 1: Fractional graph theory.}

We give two applications of the hypergraph complementation theorem to graph theory.

Firstly, fractional domination in digraphs provides a setting where LP complementation and hypergraph complementation naturally arise. We relate the \Define{fractional in-dominating number} of a digraph $D$ and the \Define{fractional total out-dominating number} of its digraph complement $\DComplement{D}$ as follows. 

\begin{theorem}[Domination complementation theorem] \label{th:domination}
For any digraph $D$, we have that $\frac{ 1 } { \Fractional{\gammaIn}(D) } + \frac{ 1 } { \Fractional{\GammaOut}(\DComplement{D}) } = 1$.
\end{theorem}

This theorem is very general, as it holds for all digraphs, and provides more specific relations about domination numbers for graphs, tournaments, and regular digraphs. The last one is itself a generalisation of the result in \cite[Theorem 7.4.1]{SU97}, which only applies to regular \emph{graphs}.


Secondly, we apply the hypergraph complementation theorem to matroids. 
We establish that the fractional matching number of a matroid coincide with its edge toughness.
This result can then be applied to graphic matroids, yielding a formula for the edge toughness of a graph.
Moreover, we derive an alternative proof of the relationship between the edge toughness of a matroid and the fractional covering number of its dual matroid.


\paragraph{Case study 2: Vertex cover with budget.}
We further investigate the problem \textsc{Vertex Cover with Budget (VCB)}. 
First, using our LP complementation results we relate the ``time per budget'' ratio $\Fractional{t}$ to the fractional chromatic number $\Fractional{\chi}$ of the graph by $\frac{ 1 }{ \Fractional{t} } + \frac{ 1 }{ \Fractional{\chi} } = 1$. 
Second, we show that, surprisingly, for any finite budget we can also relate the optimal time with multicolourings of the graph. Finally, we prove that, computing an optimum solution, where the budget is finite, is NP-complete.

\medskip

The rest of the paper is organised as follows. Section \ref{section:LP_complementation} first gives the LP complementation theorem. It then investigates its consequences to IP and derives the $\alpha$ and $\beta$ bounds. In Section \ref{section:hypergraphs}, we introduce the complement of a hypergraph and apply the LP complementation theorem to obtain the hypergraph complementation theorem. In Section \ref{section:applications}, we apply the hypergraph complementation theorem to obtain general results on the fractional dominating number of digraphs and to obtain a new proof of a result on the edge toughness of matroids. Finally, Section \ref{section:budget} applies our results from Sections \ref{section:LP_complementation} and \ref{section:hypergraphs} to the \textsc{VCB} problem.

\section{Linear Programming complementation} \label{section:LP_complementation}

\subsection{The LP complementation theorem} \label{section:complementation_theorem}

For any linear program (LP) $R$ which is feasible and bounded, we denote its optimal value as $\Opt(R)$. If $P$ is a maximisation problem, then we denote $\Opt(P) = -\infty$ if $P$ is infeasible and $\Opt(P) = \infty$ if $P$ is unbounded. Similarly, if $Q$ is a minimisation problem, then we denote $\Opt(Q) = \infty$ if $Q$ is infeasible and $\Opt(Q) = -\infty$ if $Q$ is unbounded.  We denote the all-zero vector or matrix as $0$, regardless its dimension; similarly, the all-ones vector or matrix is denoted as $1$.

We define the \Define{complement} of an LP $R$, which we denote $\LPComplement{R}$, as follows. Let $c \in \R^n$, $b \in \R^m$, $A \in \R^{m \times n}$, then for the following maximisation LP $P$, we have
\begin{align*}
    P &:~ \max \{ c^\top x : Ax \le b \},\\
    \LPComplement{P} &:~ \min \{ c^\top x : (b c^\top - A) x \ge b \}.
\end{align*}
Similarly, let $v \in \R^n$, $u \in \R^m$, $M \in \R^{m \times n}$, then for the following minimisation LP $Q$, we have
\begin{align*}
    Q &:~ \min \{ v^\top x : Mx \ge u \},\\
    \LPComplement{Q} &:~ \max \{ v^\top x : (u v^\top - M) x \le u \}.
\end{align*}
The definition above is extended to general LPs in Table \ref{table:LPComplement}.

\begin{table}[H]
\begin{center}
\def\arraystretch{1.5}
\begin{tabular}{l|l}
	Primal $R$ 				& Complement $\LPComplement{R}$						\\
	\hline
	$\max c^\top x$		& $\min c^\top x$					\\
	$A_l x \le b_l$ 	& $(b_l c^\top - A_l) x \ge b_l$	\\
	$A_e x = b_e$ 		& $(b_e c^\top - A_e) x = b_e$		\\
	$A_g x \ge b_g$ 	& $(b_g c^\top - A_g) x \le b_g$	\\
	$x_l \le 0$ 		& $x_l \le 0$						\\
	$x_g \ge 0$ 		& $x_g \ge 0$						\\
	$x_f \text{ free}$	& $x_f \text{ free}$
\end{tabular}
\end{center}
\caption{General definition of LP complement} \label{table:LPComplement}
\end{table}
%
Complementation is an involution, i.e.~$R = \LPComplement{ \LPComplement{R} }$. Moreover, complementation commutes with duality: indeed, if $\LPDual{R}$ denotes the dual of $R$, then we have
$\LPComplement{ (\LPDual{R}) } = \LPDual{ \left( \LPComplement{R} \right) }$.

Say two real numbers $x, y > 1$ are a \Define{complement pair} if $\frac{1}{x} + \frac{1}{y} = 1$.  The main result is that, provided $1 < \Opt(R) < \infty$ or $1 < \Opt(\LPComplement{R}) < \infty$, then the optimal values of $R$ and $\LPComplement{R}$ form a complement pair.

\begin{theorem}[LP complementation theorem] \label{th:LP_complementation2}
For any LP $R$, $1 < \Opt(R) < \infty$ if and only if $1 < \Opt(\LPComplement{R}) < \infty$, in which case 
\[
	\frac{1}{\Opt(R)} + \frac{1}{ \Opt( \LPComplement{R} ) } = 1.
\]
\end{theorem}

\begin{proof}
Without loss of generality, let $P : \max\{ c^\top x : Ax \le b \}$. Suppose $1 < \Opt(P) < \infty$, say $\Opt(P) = 1 + a$ for some $a > 0$. Let $x$ be an optimal solution of $P$, and let $\Complement{x} = \frac{1}{a} x$. We then have 
\[
	(b c^\top - A) \Complement{x} = \frac{1 + a}{a} b - \frac{1}{a}A x \ge b,
\]
and hence $\Complement{x}$ is a feasible solution of $\LPComplement{P}$, with value $1 + \frac{1}{a}$.

We have just shown that $\LPComplement{P}$ has a feasible solution of value greater than one. We now prove that $\Opt(\LPComplement{P}) > 1$. For the sake of contradiction, suppose that $\LPComplement{P}$ has a feasible solution with value at most $1$, then for any $\epsilon > 0$, $\LPComplement{P}$ has a feasible solution $\Complement{y}$ with value $1 + \epsilon$. Let $y = \frac{1}{\epsilon} \Complement{y}$, then by the same reasoning as above, $y$ is a feasible solution of $P$ with value $1 + \frac{1}{\epsilon}$; we conclude that $P$ is unbounded, which is the desired contradiction.

Having established that $1 < \Opt(\LPComplement{P}) < \infty$, we find that the first paragraph showed that
\[
	\frac{1}{ \Opt(P) } + \frac{1}{ \Opt(\LPComplement{P}) } \ge \frac{1}{ a + 1 } + \frac{a}{ a + 1 } = 1.
\]
We now prove the reverse inequality. Let $\Opt(\LPComplement{P}) = 1 + \bar{a}$ with $\Complement{a} > 0$ and $\Complement{x}$ be an optimal solution of $\LPComplement{P}$. Then $x = \frac{1}{\Complement{a}} \Complement{x}$ is a feasible solution of $P$ with value $1 + \frac{1}{\Complement{a}}$, and we obtain
\[
	\frac{1}{ \Opt(P) } + \frac{1}{ \Opt(\LPComplement{P}) } \le \frac{\Complement{a}}{ \Complement{a} + 1 } + \frac{1}{ \Complement{a} + 1 } = 1.
\]

The case where we suppose $1 < \Opt(\LPComplement{P}) < \infty$ instead is similar and hence omitted.
\end{proof}

If $(a,b)$ form a complement pair, and say $a \le b$, then $a \le 2$ and $b \ge 2$. The LP complementation theorem then has this immediate consequence.

\begin{corollary} \label{corollary:2_separation}
Suppose $1 < \Opt(P) \le \Opt(\LPComplement{P}) < \infty$. Then 
\[
    \Opt(P) \le 2 \le \Opt( \LPComplement{P} ).
\]
Moreover, the following are equivalent: $\Opt(P) = 2$; $\Opt( \LPComplement{P} ) = 2$; $\Opt(P) = \Opt( \LPComplement{P} )$.
\end{corollary}

\subsection{Feasibility and boundedness} \label{section:feasibility}

The strong duality theorem not only states that the optimal values of a primal LP and that of its dual are equal whenever they are finite, but it also considers the case of infeasibility and unboundedness: an LP is infeasible if and only if its dual is unbounded. Duality hence considers three possible scenarios for a maximisation LP $P$: $\Opt(P) = -\infty$, $-\infty < \Opt(P) < \infty$, and $\Opt(P) = \infty$; then only three scenarios are possible for the primal-dual pair $(P, \LPDual{P})$.


Complementation, on the other hand, considers four possible scenarios for $P$: $\Opt(P) = -\infty$, $-\infty < \Opt(P) \le 1$, $1 < \Opt(P) < \infty$ and $\Opt(P) = \infty$. So this could make up to sixteen scenarios for the primal-complement pair $(P, \LPComplement{P})$. The LP complementation theorem implies that if $1 < \Opt(P) < \infty$, then so does $\Opt(\LPComplement{P})$ and vice versa. The proof of Theorem \ref{th:LP_complementation2} also shows that if $\Opt(P) > 1$, then $\LPComplement{P}$ is feasible, i.e.~$\Opt(\LPComplement{P}) < \infty$. Therefore, if $\Opt(P) = \infty$, then $\Opt( \Complement{P} ) \le 1$. This leaves nine possible scenarios; for each of those we give an example in Table \ref{table:scenarios} below.

\begin{table}[H]
\begin{center}
\def\arraystretch{1.5}
\begin{tabular}{l | l | l | l}
    $P$ & $\LPComplement{P}$ & $\Opt(P)$ & $\Opt(\LPComplement{P})$\\
    \hline
    $\max\{ x: x \le b \}$ & $\min\{ x :  (b-1) x \ge b \}$ & $b > 1$ & $\frac{b}{b-1} > 1$ \\
    $\max\{ x : x \le 0, x \ge 1 \}$ & $\min\{ x : x \le 0 \}$ & $-\infty$ & $-\infty$ \\
    $\max\{ -x : x \ge 1, x \le 0 \}$ & $\min\{ -x : -2x \le 1, x \le 0 \}$ & $-\infty$ & $0$ \\
    $\max\{ x : x \le 1, x \ge 2 \}$ & $\min\{ x : 0 \ge 1 \}$  & $-\infty$ & $\infty$ \\
    $\max\{ -x : x \ge 0 \}$ & $\min\{ -x : x \ge 0\}$ & $0$ & $-\infty$ \\
    $\max\{ x : x = 0\}$ & $\min\{ x : x = 0 \}$ & $0$ & $0$ \\
    $\max\{ x : x \le 1 \}$ & $\min\{ x : 0 \ge 1 \}$ & $1$ & $\infty$ \\
    $\max\{ x : x \ge 2 \}$ & $\min\{ x: x \le 2 \}$ & $\infty$ & $-\infty$ \\
    $\max\{ x : x \ge 0 \}$ & $\min\{ x : x \ge 0 \}$ & $\infty$ & $0$ 
\end{tabular}
\caption{Examples of the nine possible scenarios for $( \Opt(P), \Opt(\LPComplement{P}) )$. Here $x$ is a single variable.} \label{table:scenarios}
\end{center}
\end{table}

\subsection{Consequence for integer programming} \label{section:IP}

The proof of Theorem \ref{th:LP_complementation2} actually shows that, whenever $\Opt(P) > 1$, $x$ is an optimal solution of $P$ if and only if $\frac{1}{\Opt(P) - 1} x$ is an optimal solution of $\LPComplement{P}$. This has a consequence for integer programming. 

For any linear program $R$, adding the constraint that the variables be integral yields an integer program, which we denote $\IP{R}$. We consider the LPs in standard form $P : \max\{ c^\top x : Ax \le b  \}$ and $Q : \min\{ v^\top x : Mx \ge u \}$. For any $s,t \in \N$, we then introduce
\begin{align*}
	P_s &:~ \max \{ c^\top x	: Ax \le sb \},\\
	Q_t &:~ \min \{ v^\top x :	Mx \ge tu \}.
\end{align*}
Clearly, $\Opt(P_s) = s \Opt(P)$ and $\Opt(Q_t) = t \Opt(Q)$.

The LP complementation theorem has two consequences for IPs of the form $\IP{P_s}$ or $\IP{Q_t}$. We give these for $\IP{P_s}$ below; their counterparts for $\IP{Q_t}$ are analogous and hence omitted.

\begin{corollary} \label{corollary:IP}
Suppose $P : \max \{ c^\top x : Ax \le b \}$, where $A$, $b$, and $c$ are all integral. Let $s,t \in \N$ such that $\tilde{x} \in (\Z / s)^n$ is an optimal solution of $P$ with value $1 + \frac{t}{s} > 1$. Then
\begin{enumerate}
    \item \label{item:common_solution}
    The four optimisation problems $P_s$, $\IP{P_s}$, $(\LPComplement{P})_t$, and $\IP{ ( \LPComplement{P} )_t }$ all have a common optimal solution $\hat{x} = s \tilde{x}$ of value $s + t$.

    \item \label{item:Pst}
    We have $\Opt( \IP{P_{st}} ) = \Opt(P_{st})$ and $\Opt( \IP{ (\LPComplement{P})_{st}  } ) = \Opt( (\LPComplement{P})_{st} )$, thus 
    \[
        \frac{1}{ \Opt( \IP{P_{st}} ) } + \frac{1}{ \Opt( \IP{ (\LPComplement{P})_{st} } )  } = \frac{1}{st}.
    \]

\end{enumerate}
\end{corollary}

\begin{proof}
\begin{enumerate}
    \item 
    By definition, $\hat{x}$ is an optimal solution of $P_s$ with value $s+t$. Since
    $
        \hat{x} = t \frac{1}{\Opt(P) - 1} \tilde{x},
    $
    we obtain that $\hat{x}$ is also an optimal solution of $(\LPComplement{P})_t$. Moreover, $\hat{x}$ is integral, therefore it is also an optimal solution of $\IP{P_s}$ and $\IP{ (\LPComplement{P})_t }$.
    
    \item 
    It is easily seen that for any LP $R$ and any $a \in \N$,  if $R_a$ has an integral optimal solution, then so does $R_{ab}$ for any $b \in \N$. By item \ref{item:common_solution}, $P_s$ and $(\LPComplement{P})_t$ both have integral optimal solutions, thus so do $P_{st}$ and $(\LPComplement{P})_{st}$. Applying the LP complementation theorem then finishes the proof.

\end{enumerate}
\end{proof}


\subsection{Bounds}

Let $P : \max\{c^\top x : Ax \le b\}$, where $b > 0$ and $A \ne 0$. Let $Q : \min\{ v^\top x: Mx \ge u \}$ with $u > 0$ and $M \ne 0$. Let $A_i$ and $M_i$ denote the $i$-th rows of $A$ and $M$, respectively. 
We define the \Define{rank function} of $P$ and $Q$, respectively by
\begin{align*}
    \rho_P(x)   &= \max \left\{ \frac{A_i x}{b_i} : 1 \le i \le m \right\} \\
    \sigma_Q(x) &= \min \left\{ \frac{M_i x}{u_i} : 1 \le i \le m \right\}.
\end{align*}
Then $x$ is a feasible solution of $P$ (of $Q$, respectively) if and only if $\rho_P(x) \le 1$ ($\sigma_Q(x) \ge 1$, respectively). We now introduce
\begin{align*}
    \alpha(P) &= \max \left\{ \frac{c^\top x}{ \rho_P(x) } :  \rho_P(x) < c^\top x \right\},\\
    \beta(Q) &= \min \left\{ \frac{v^\top x}{ \sigma_Q(x) } : 0 < \sigma_Q(x)  \right\}.
\end{align*}
The $\alpha(P)$ quantity is a refinement of the simple following observation. The vector $x'$ with $x'_i =  \frac{b_i}{\sum_j A_{ij}}$ for all $i$ is a feasible solution of $P$ with $\rho_P(x') = 1$ and value $c^\top x' = \frac{c^\top x'}{ \rho_P(x')} \le \alpha(P)$. A similar comment can be made about $\beta(Q)$.

We also introduce the counterparts for the IPs as
\begin{align*}
    \alpha(\IP{P}) &= \max \left\{ \frac{c^\top x}{ \rho_P(x) } :  \rho_P(x) < c^\top x , x \in \Z^n \right\},\\
    \beta(\IP{Q}) &= \min \left\{ \frac{v^\top x}{ \sigma_Q(x) } : 0 < \sigma_Q(x), x \in \Z^n \right\}.
\end{align*}

Suppose that $1 < \Opt( \IP{P} ) < \infty$ and $1 < \Opt( \IP{\LPComplement{P}} ) < \infty$. We prove that $\alpha(\IP{P})$ and $\beta(\IP{ \LPComplement{P} })$ are complement pairs. (The same is true for $\alpha(P)$ and $\beta( \LPComplement{P} )$, as we shall prove later.)

\begin{lemma} \label{lemma:alpha}
If $1 < \Opt( \IP{P} ) < \infty$ and $1 < \Opt( \IP{\LPComplement{P}} ) < \infty$, then we have
\[
    \frac{1}{\alpha(\IP{P})} + \frac{1}{\beta(\IP{ \LPComplement{P} })} = 1.
\]
\end{lemma}

\begin{proof}
By definition, we have $\rho_P(x) + \sigma_{\LPComplement{P}}(x) = c^\top x$. Therefore,
\begin{align*}
    1 - \frac{1}{\alpha(\IP{P})} &= 1 - \min\left\{ \frac{ \rho_P(x) }{ c^\top x } : \rho_P(x) < c^\top x, x \in \Z^n \right\} \\
    &= \max\left\{ \frac{ c^\top x - \rho_P(x) }{ c^\top x } : \rho_P(x) < c^\top x, x \in \Z^n \right\} \\
    &= \frac{1}{ \min\left\{ \frac{ c^\top x }{ \sigma_{\LPComplement{P}}(x) } : 0 < \sigma_{\LPComplement{P}}(x) , x \in \Z^n \right\} } \\
    &= \frac{1}{\beta( \IP{ \LPComplement{P} })}.
\end{align*}
\end{proof}

We obtain the more complete version of Theorem \ref{theorem:alpha1} as follows.

\begin{theorem} \label{theorem:alpha}
Let $P : \max\{c^\top x : Ax \le b\}$, where $b > 0$ and $A \ne 0$. Let $Q : \min\{ v^\top x: Mx \ge u \}$ with $u > 0$ and $M \ne 0$. 
\begin{enumerate}
    \item 
    If $\Opt( \IP{P} ) > 1$, then $1 < \Opt( \IP{P} ) \le \alpha(\IP{P}) \le \alpha(P) = \Opt( P )$.

    \item
    If $\Opt( \IP{Q} ) > 1$, then $\Opt( Q ) = \beta( Q ) \le \beta( \IP{ Q } ) \le \Opt( \IP{ Q } )$.

    \item
    If $1 < \Opt(\IP{P}) \le \Opt( \IP{\LPComplement{P}} ) < \infty$, then $\Opt(P) = \alpha(\IP{P}) \iff \Opt(\LPComplement{P}) = \beta(\IP{ \LPComplement{P} })$.
\end{enumerate}
\end{theorem}

\begin{proof}
\begin{enumerate}
    \item
    We prove the bounds on $\alpha$.
    \begin{enumerate}
        \item \label{item:1} $\Opt( \IP{P} ) \le \alpha(\IP{P})$. Let $x'$ be an optimal solution of $\IP{P}$. Then $0 < \rho_P(x') \le 1 < c^\top x'$, thus
    \[
        \alpha(\IP{P}) \ge \frac{ c^\top x' }{ \rho_P(x') } \ge \Opt( \IP{P} ).
    \]
    
        \item $\alpha(\IP{P}) \le \alpha(P)$. By definition.
        
        \item $\alpha(P) \le \Opt(P)$. Let $x''$ be such that $\alpha(P) = \frac{ c^\top x'' }{ \rho_P(x'') }$, then let $y = \frac{1}{\rho_P(x'')} x''$. We have 
        \[
        Ay = \frac{1}{\rho_P(x'')} A x'' \le \frac{1}{\rho_P(x'')} (\rho_P(x'') b) = b,
        \]
        hence $y$ is a feasible solution of $P$; its value is $c^\top y = \alpha(P)$.
        
        \item $\Opt(P) \le \alpha(P)$. Same proof as item \ref{item:1} above. 
    \end{enumerate}

    \item
    Similar and hence omitted. 
    
    \item 
    The pair $( \Opt(P), \Opt(\LPComplement{P}) )$ is a complement pair by the LP complementation theorem, while $(\alpha( \IP{P} ), \beta( \IP{\LPComplement{P}} ))$ is a complement pair by Lemma \ref{lemma:alpha}. Therefore, $\Opt(P) = \alpha( \IP{P} )$ if and only if $\Opt(\LPComplement{P}) = \beta( \IP{\LPComplement{P}} )$.
\end{enumerate}

\end{proof}

\section{Fractional hypergraph theory} \label{section:hypergraphs}

\subsection{Fractional hypergraph parameters} \label{section:hypergraph_parameters}

Many important graph parameters, such as the clique number, chromatic number, matching number, etc. can be viewed as the optimal values of IPs defined on hypergraphs related to the original graph. Fractional hypergraph theory then lifts the integrality constraint and focuses on the fractional analogues of those parameters, which are the optimal values of the corresponding LP relaxations. In this section, we review four important fractional hypergraph parameters, and how they are related. A comprehensive account of those parameters can be found in \cite{SU97}.

A (finite) \Define{hypergraph} is a pair $H = (V,E)$, where $V$ is a set of $n$ vertices and $E$ is a multiset of $m$ edges, each being a subset of vertices. Recall the following concepts for a hypergraph $H$. Its \Define{incidence matrix} is $M=M_H \in \R^{n \times m}$ such that, for all $v \in V$ and $e \in E$,
\[
	M_{ve} = \begin{cases}
	1 &\text{if } v \in e\\
	0 & \text{otherwise}.
	\end{cases}
\]
A vertex is \Define{universal} if it belongs to all edges of $H$. On the other hand, a vertex is \Define{isolated} if it does not belong to any edge of $H$. Say an edge $e$ is \Define{complete} if $e = V$ and that it is \Define{empty} if $e = \emptyset$. 

\medskip

We now introduce four LPs related to a hypergraph $H$; we shall then apply the LP complementation theorem to them. All those LPs have an optimal value in $[1, \infty]$. Technically, if the optimal value is either $1$ or $\infty$, then the LP complementation theorem does not apply. However, we highlight these degenerate cases, which can easily be handled separately. By using the convention that $1$ and $\infty$ form a complement pair, we can then include these degenerate cases in our hypergraph complementation theorem.

A \Define{covering} of $H$ is a set of edges whose union is equal to $V$. The \Define{covering number} $k(H)$ of $H$ is the minimum size of a covering of $H$. The \Define{fractional covering number} $\Fractional{k}(H)$ of $H$ is the optimal value of the following LP, which we give in two forms: a concise matrix form and a more explicit form.
\begin{align*}
    K(H) &:~ \min\{ 1^\top x :  M_H x \ge 1, x \ge 0\}\\
    &= \min 	\left\{	\sum_{e \in E} x_e : \sum_{e \ni v} x_e 	\ge 1 	\;\forall v \in V, x_e 				\ge 0	\;	\forall e \in E \right\}.
\end{align*}

It is easily seen that the covering number is actually the optimal value of $\IP{ K(H) }$. We remark that $K(H)$ is feasible if and only if $H$ has no isolated vertices. Clearly, if $K(H)$ is feasible, then it has an optimal solution. In that case, $\Fractional{k}(H) = \Opt(K(H)) \ge 1$, with strict inequality if and only if $H$ has no complete edges.

A \Define{packing} of $H$ is a set of vertices such that every edge contains at most one of those vertices. 
The \Define{packing number} $p(H)$ of $H$ is the maximum size of a packing of $H$.
The \Define{fractional packing number} $\Fractional{p}(H)$ of $H$ is the optimal value of the LP dual to $K(H)$:
\begin{align*}
    P(H) = \LPDual{K(H)} &:~ \max	\{	1^\top y : 	M_H^\top y 	\le 1, y \ge 0\} \\
    &= \max \left\{ 	\sum_{v \in V} y_v : \sum_{v \in e} y_v \le 1 	\; \forall e \in E, y_v \ge 0	\; \forall v \in V \right\}.
\end{align*}
Again, the maximum size of a packing of $H$ corresponds to the optimal value of the analogous IP. We remark that $P(H)$ is always feasible. However, $P(H)$ is bounded if and and only if $H$ has no isolated vertices. In that case, $\Fractional{p}(H) = \Opt(P(H)) > 1$ if and only if it has no complete edges. LP duality then yields $\Fractional{p}(H) = \Fractional{k}(H)$.

For any hypergraph $H = (V, E)$, its \Define{dual} is $\HDual{H} = (E, \Dual{V})$, where $\Dual{V} = \{ \{ e : e \ni v\} : v \in V \}$. We then have $M_{\HDual{H}} = (M_H)^\top$ and $\HDual{(\HDual{H})} \cong H$. We note that $H$ has no empty edge if and only if $\HDual{H}$ has no isolated vertex, and vice versa. 

A \Define{matching} of $H$ is a set of disjoint edges; it corresponds to a packing of $\HDual{H}$. The \Define{fractional matching number} is then $\Fractional{\mu}(H) = \Fractional{p}(\HDual{H})$, i.e.~the optimal value of:
\begin{align*}
    M(H) = P(\HDual{H}) &:~ \max	\{	1^\top y : 	M_H y \le 1, y \ge 0\} \\
    &= \max 	\left\{ 	\sum_{e \in E} y_e : \sum_{e \ni v} y_e \le 1 	\; \forall v \in V, y_e \ge 0 \; \forall e \in E \right\}.
\end{align*}

A \Define{transversal} of $H$ is a set of vertices such that every edge contains a vertex from that set; it corresponds to a covering of $\HDual{H}$. The \Define{fractional transversal number} is then $\Fractional{\tau}(H) = \Fractional{k}(\HDual{H})$, i.e.~the optimal value of:
\begin{align*}
    T(H) = K(\HDual{H}) &:~ \min	\{	1^\top x : M_H^\top x \ge 1, x \ge 0 \} \\
    &= \min \left\{	\sum_{v \in V}  x_v : \sum_{v \in e} x_v \ge 1 	\; \forall e \in E,  x_v \ge 0	\;\forall v \in V \right\}.
\end{align*}
Again, LP duality yields $\Fractional{\mu}(H) = \Fractional{\tau}(H)$.

In summary, for any $H$ we have $\Fractional{\tau}(H) = \Fractional{k}(\HDual{H}) = \Fractional{p}(\HDual{H}) = \Fractional{\mu}(H)$.

\subsection{Hypergraph complementation} \label{Section:hypergraph_complementation}

We define the \Define{complement} of $H$ as $\HComplement{H} = (V, \{V \setminus e : e \in E \})$. We then have $M_{\HComplement{H}} = 1 - M_H$. Hypergraph complementation is an involution that commutes with duality: $\HComplement{\HComplement{H}} = H$ and $	\HComplement{ \left( \HDual{H} \right) } = \HDual{ \left( \HComplement{H} \right) }$.

It can be easily shown that
\[
    \LPComplement{K( \HDual{H} )} = \LPDual{ K( \HComplement{H} ) }.
\]
Therefore, we obtain eight LPs, which are related in Figure \ref{fig:8LPs-2}.

For any $S \subseteq V$, let
\begin{align*}
	\rho_H(S)	&= \max \left\{ |S \cap e| : e \in E \right\},\\
	\alpha(H) &= \max \left\{ \frac{ |S| }{ \rho_H(S) } : S \subseteq V, \rho_H(S) > 0 \right\}.
\end{align*}
We similarly define for any $Z \subseteq E$
\begin{align*}
	\sigma_H(Z) 	&= \min \left\{ |\{ e \in Z: v \in e \}| : v \in V \right\},	\\
	\beta(H) 			&= \min \left\{  \frac{ |Z| }{ \sigma_H(Z)} : Z \subseteq E, \sigma_H(Z) > 0 \right\}.
\end{align*}
We immediately recognise that $\alpha(H) = \alpha( \IP{P(H)} )$ and $\beta(H) = \beta( \IP{K(H)} )$. Denoting the maximum size of an edge in $H$ as $\epsilon(H) = \max\{ |e| : e \in E \}$ and the minimum degree of a vertex in $H$ as $\delta(H) = \min\{ |e \ni v| : v \in V \}$, we have
\[
    \alpha(H) \ge \frac{|V|}{ \epsilon(H) }, \quad \beta(H) \le \frac{ |E| }{ \delta(H) }.
\]






The next theorem is a more complete version of Theorem~\ref{theorem:hypergraph_complementation-small}.

\begin{theorem}[Hypergraph complementation theorem] \label{theorem:hypergraph_complementation}
For any hypergraph $H$,
\[
	\frac{1}{\Fractional{k}(\HDual{H})} + \frac{1}{\Fractional{k}(\HComplement{H})} = 1.
\]
Moreover, we have the bounds
\[
    p(H) \le \alpha(H) \le \Fractional{k}(H) \le \beta(H) \le k(H),
\]
with equalities reached as follows:
\[
    \Fractional{k}(\HComplement{H}) = \alpha(\HComplement{H}) \iff \Fractional{k}(\HDual{H}) = \beta(\HDual{H}).
\]
\end{theorem}

\begin{proof}
We have $\LPComplement{ P( \HDual{H} ) } = K( \HComplement{H} )$. Theorem \ref{th:LP_complementation2} then shows that $\Fractional{p}(\HDual{H}) = \Fractional{k}(\HDual{H})$ and $\Fractional{k}( \HComplement{H} )$ are complement pairs. Theorem \ref{theorem:alpha} then gives the other two equations.
\end{proof}

Obviously, the hypergraph complementation theorem holds for all four parameters reviewed in Section \ref{section:hypergraph_parameters}.

\begin{corollary}
For any hypergraph $H$,
\[
	\frac{1}{ \Fractional{k}(\HDual{H}) } + \frac{1}{ \Fractional{k}(\HComplement{H}) } = 
	\frac{1}{ \Fractional{p}(\HDual{H}) } + \frac{1}{ \Fractional{p}(\HComplement{H}) } = 
	\frac{1}{ \Fractional{\mu}(\HDual{H}) } + \frac{1}{ \Fractional{\mu}(\HComplement{H}) } = 
	\frac{1}{ \Fractional{\tau}(\HDual{H}) } + \frac{1}{ \Fractional{\tau}(\HComplement{H}) } = 1.
\]
\end{corollary}

\section{Applications to fractional graph theory} \label{section:applications}

\subsection{Fractional domination in graphs and digraphs} \label{section:fractional_domination}

All the digraphs we consider are simple (no parallel arcs) and irreflexive (no loops). Thus, a \Define{digraph} is a pair $D = (V(D), E(D))$, where $E(D) \subseteq V(D)^2 \setminus \{ (v,v) : v \in V(D) \}$. The \Define{adjacency matrix} of $D$ is the $\{0,1\}$-matrix $A_D = (a_{ij} : i,j \in V(D))$, where $a_{ij} = 1$ if and only if $(i,j) \in E(D)$. We define \Define{digraph complement} of $D$, denoted $\DComplement{D}$, with $V(\DComplement{D}) = V(D)$ and $E(\DComplement{D}) = (V(D)^2 \setminus \{ (v,v) : v \in V(D) \}) \setminus E(D)$.

For any $v \in V(D)$, the \Define{open in-neighbourhood} of $v$ is $\InOpen{N}(v) = \{ u : (u,v) \in E(D) \}$; the \Define{closed in-neighbourhood} of $v$ is $\InClosed{N}(v) = \InOpen{N}(v) \cup \{v\}$. We thus define two hypergraphs $\InOpen{H}(D)$ and $\InClosed{H}(D)$, both with vertex set $V(D)$, and where the edges of $\InOpen{H}(D)$ are the open in-neighbourhoods of all vertices and the edges of $\InClosed{H}(D)$ are the closed in-neighbourhoods instead. Open and closed out-neighbourhoods are defined similarly, and hence we define $\OutOpen{H}(D)$ and $\OutClosed{H}(D)$ similarly as well. We note that $M_{\InOpen{H}(D)} = A_D$ and  $M_{\InClosed{H}(D)} = I_n + A_D$, where $n$ is the number of vertices in $D$ and $I_n$ is the identity matrix of size $n$. We then have
\[
\HDual{\InOpen{H}(D)} \cong \OutOpen{H}(D), \quad \HDual{\InClosed{H}(D)} \cong \OutClosed{H}(D), \quad  \HComplement{\OutOpen{H}(D)} = \OutClosed{H}(\DComplement{D}), \quad \HComplement{\OutClosed{H}(D)} = \OutOpen{H}(\DComplement{D}).
\]
An \Define{in-dominating set} of $D$ is a set $S$ of vertices such that for any $v \in V(D)$, there exists $s \in S \cap \InClosed{N}(v)$; in other words, it is a transversal of $\InClosed{H}(D)$. Similarly, a \Define{total in-dominating set} of $D$ is a transversal of $\InOpen{H}(D)$. We note that $D$ always has an in-dominating set ($V(D)$ itself), while $D$ has a total in-dominating set if and only if it has no sources (vertices with empty in-neighbourhoods). Out-dominating and total out-dominating sets are defined similarly. See the book by Haynes, Hedetniemi and Slater for a comprehensive survey of domination problems \cite{HHS98}.

The \Define{fractional in-dominating number} of $D$ and the \Define{fractional total out-dominating number} of $\DComplement{D}$ are then, respectively:
\begin{align*}
	\Fractional{\gammaIn}(D) 		&= \Fractional{\tau}(\InClosed{H}(D)) = \Fractional{\tau}(\HDual{\OutClosed{H}(D)}),\\
	\Fractional{\GammaOut}(\DComplement{D}) &= \Fractional{\tau}(\OutOpen{H}(\DComplement{D})) = \Fractional{\tau}(\HComplement{\OutClosed{H}(D)}).
\end{align*}
Let us call a vertex $v$ \Define{in-universal} in $D$ if $v \in \InClosed{N}(u)$ for all $u \in V$, i.e.~$v$ is a universal vertex of $\InClosed{H}(D)$. We note that $\Fractional{\gammaIn}(D) > 1$ if and only if $D$ has no in-universal vertices; the latter is also equivalent to $\Fractional{\GammaOut}(\DComplement{D}) < \infty$. We obtain the following; again the degenerate case of an in-universal vertex is handled by the $(1, \infty)$ complement pair.

\begin{theorem}[Domination complementation theorem] \label{th:domination2}
For any digraph $D$,
\[
	\frac{ 1 } { \Fractional{\gammaIn}(D) } + \frac{ 1 } { \Fractional{\GammaOut}(\DComplement{D}) } = 1.
\]
\end{theorem}

We focus on three special cases of Theorem \ref{th:domination2}. Firstly, a \Define{graph} $G$ is a symmetric digraph, i.e.~$A_G = A_G^\top$. For a graph $G$, in-neighbourhoods and out-neighbourhoods coincide. We then refer to $\Fractional{\gamma}(G) = \Fractional{\gammaIn}(G) = \Fractional{\gammaOut}(G)$ as the fractional dominating number of $G$; the fractional total dominating number of $G$ is defined and denoted similarly. 

\begin{corollary}
For any graph $G$,
\[
	\frac{ 1 } { \Fractional{\gamma}(G) } + \frac{ 1 } { \Fractional{\Gamma}(\DComplement{G}) } = 1.
\]
\end{corollary}

Secondly, a \Define{tournament} $T$ is a digraph where $(i,j) \in E(T)$ if and only if $(j,i) \notin E(T)$. If $T$ is a tournament, then $\DComplement{T}$ is obtained by reversing the direction of every arc in $T$. Thus, $\OutOpen{H}(\DComplement{T}) = \InOpen{H}(T)$ and we obtain the following corollary.

\begin{corollary}
For any tournament $T$,
\[
	\frac{1}{\Fractional{\gammaIn}(T)} + \frac{1}{ \Fractional{\GammaIn}(T) } = 1.
\]
In particular, $\Fractional{\gammaIn}(T) \le 2 \le \Fractional{\GammaIn}(T)$.
\end{corollary}

Thirdly, $D$ is \Define{$k$-regular} if for every vertex $v \in V(D)$, $|\InOpen{N}(v)| = |\OutOpen{N}(v)| = k$. Clearly, if $D$ has $n$ vertices, then $D$ is $k$-regular if and only if $\DComplement{D}$ is $(n-1-k)$-regular. The following result is a generalisation of the result in \cite[Theorem 7.4.1]{SU97}, which only applies to regular \emph{graphs}.

\begin{corollary}
For any $k$-regular digraph $D$ on $n$ vertices,
\[
    \Fractional{\gammaIn}(D) = \frac{n}{k+1}, \qquad \Fractional{\GammaOut}(D) = \frac{n}{k}.
\]
\end{corollary}

\begin{proof}
The value $n/(k+1)$ is an obvious upper bound for $\Fractional{\gammaIn}(D)$ (assign $1/(k+1)$ to each vertex); similarly, $n/(n - k - 1)$ is an upper bound for $\Fractional{\GammaOut}(\DComplement{D})$. By Theorem \ref{th:domination2}, these bounds must be tight.
\end{proof}

\subsection{Application to edge toughness of matroids} \label{section:matroids}

%

Let $M = (V, I)$ be a matroid \cite{Oxl06}, where $I$ is the collection of independent sets of $M$. A basis of $M$ is a maximal independent set. We then denote the set of bases of $M$ as $B(M)$ and we construct the hypergraph $H_B(M) = (V, B(M))$. The rank function of $M$ is then $\rho_M = \rho_{H_B(M)}$, i.e.~$\rho_M(S) = \max\{ S \cap e : e \in B(M) \}$. The dual matroid $\Complement{M}$ is then defined as $H_B(\Complement{M}) = \HComplement{H_B(M)}$. We note that the dual of a matroid is commonly denoted as $\Dual{M}$, but in this paper, denoting it as $\Complement{M}$ better reflects that its definition is in terms of hypergraph complementation, instead of hypergraph duality.

The \Define{edge toughness} (or strength) of $M$ is \cite{SU97}
\[
	\sigma'(M) = \min \left\{ \frac{|V \setminus S|}{ \rho_M(V) - \rho_M(S) } : S \subseteq V, \rho_M(V) > \rho_M(S) \right\}.
\]
The edge toughness is well defined unless $\rho_M(V) = 0$. Moreover, $\sigma'(M) = 1$ if and only if $M$ has a coloop, i.e.~an element $v$ that belongs to all bases. Say that $M$ is nontrivial if it falls in neither case mentioned above; then its edge toughness satisfies $\sigma'(M) > 1$. 

Next, we use the hypergraph complementation theorem to show that
the fractional transversal number and fractional matching number of a matroid coincide with its edge toughness.

\begin{theorem} \label{th:matroids}
For any nontrivial matroid $M$, we have
\[
	\Fractional{\mu}(H_B(M)) = \Fractional{\tau}(H_B(M)) = \sigma'(M).
\]
\end{theorem}

The proof of Theorem \ref{th:matroids} is based on the following lemma. For any hypergraph $H$, let
\[
	\gamma(H) = \min \left\{ \frac{ |T| }{ |T| - \rho_H(T) } : T \subseteq V, |T| > \rho_H(T) \right\}.
\]

\begin{lemma} \label{lemma:beta_gamma}
For any hypergraph $H$, $\beta(\HDual{H}) = \gamma(\HComplement{H})$.
\end{lemma}

\begin{proof}
We denote the set of edges of $H$ as $E$, and the set of edges of $\HComplement{H}$ as $\Complement{E}$. For any $T \subseteq V$, we have
\[
	\sigma_{\HDual{H}}(T) = \min \{ |T \cap e| : e \in E \} = |T| - \max \{ |T \cap \Complement{e}| : \Complement{e} \in \Complement{E} \} = |T| - \rho_{\HComplement{H}}(T),
\]
and hence
\begin{align*}
	\beta( \HDual{H} ) &= \min \left\{ \frac{ |T| }{ \sigma_{\HDual{H}} (T) } : T \subseteq V, \sigma_{\HDual{H}} (T) > 0 \right\} \\
	& = \min \left\{ \frac{ |T| }{ |T| - \rho_{\HComplement{H}}(T) } : {T \subseteq V, |T| > \rho_{\HComplement{H}}(T) } \right\}\\
	& = \gamma(\HComplement{H}).
\end{align*}
\end{proof}

\begin{proof}[Proof of Theorem \ref{th:matroids}]
Firstly, by the matroid base covering theorem (see \cite[Theorem 5.4.1]{SU97} or \cite[Corollary 42.1c]{Sch03}), the fractional covering number of a matroid reaches the $\alpha$ bound in Theorem \ref{theorem:hypergraph_complementation}. For the dual matroid, we obtain
\[
	\Fractional{k}( H_B( \Complement{M} ) ) = \alpha( H_B( \Complement{M} ) ).
\]
Moreover, thanks to Lemma \ref{lemma:beta_gamma}, we recognise that
\[
    \sigma'(M) = \gamma( H_B(\Complement{M}) ) = \beta( \HDual{ H_B(M) } ).
\]
Applying Theorem \ref{theorem:hypergraph_complementation} then yields
\[
    \Fractional{\mu}( H_B(M) ) = \Fractional{k}( \HDual{ H_B( M ) } ) = \beta( \HDual{ H_B( M ) } ) = \sigma'(M).
\]
\end{proof}

Applying the hypergraph complementation theorem, we obtain the following corollary, already given in \cite{SU97}.

\begin{corollary}[Theorem 5.6.8 in \cite{SU97}]
For any nontrivial matroid, we have
\[
    \frac{1}{ \sigma'(M) } + \frac{1}{ \Fractional{k}( H_B(\Complement{M}) ) } = 1.
\]
\end{corollary}

In particular, if $M_G$ is the cycle matroid of a graph $G$, where the elements of $M_G$ are the edges of $G$ and the bases of $M_G$ are all spanning forests of $G$ \cite{Oxl06}, then the edge toughness of $M_G$ reduces to the edge toughness (a.k.a strength) of $G$, defined as follows. For any $Z \subseteq E(G)$, let $G - Z$ denote the graph obtained by removing the edges from $Z$, and let $c(G - Z)$ denote the number of its connected components. Then
\[
	\sigma'(G) = \min \left\{ \frac{|Z|}{c(G - Z) - c(G)} : Z \subseteq E(G), c(G-Z) > c(G) \right\}.
\]
We remark that $\sigma'(G)$ is well defined if and only if $E(G)$ is nonempty. Moreover, $\sigma'(G) = 1$ if and only if $G$ has a cut edge, i.e.~$G$ has a connected component that is not $2$-edge connected.

Denote $H_{SF}(G) = H_B(M_G)$. The matching number of $H_{SF}(G)$ is the maximum number of edge-disjoint spanning forests in $G$. On the other hand, the transversal number of $H_{SF}(G)$ is the smallest size of an edge cut set of $G$. In particular, these two quantities are equal to $1$ whenever $G$ has a cut edge. Their fractional analogues are then equal to the edge toughness of $G$.

\begin{corollary}
For any graph $G$ whose connected components are all $2$-edge connected,
\[
	\Fractional{\mu}(H_{SF}(G)) = \Fractional{\tau}( H_{SF}(G) ) = \sigma'(G).
\] 
\end{corollary}


\section{Vertex cover with budget} \label{section:budget}


\subsection{The vertex cover hypergraph}

Let $G$ be a graph. A \Define{vertex cover} is a set $S$ of vertices such that $V \setminus S$ is independent. We define $H_{VC}(G)$ as the hypergraph whose edges are all the vertex covers of $G$. Then its complement is $\HComplement{H_{VC}(G)} = H_{IS}(G)$, whose edges are the independent sets of $G$. It immediately follows that $\Fractional{k}(\HComplement{H_{VC}(G)})$ is equal to $\Fractional{\chi}(G)$, the \Define{fractional chromatic number} of $G$. We then denote 
\[
    \Fractional{t}(G) = \Fractional{\mu}(H_{VC}(G)) = \Fractional{k}(\HDual{H_{VC}(G)}). 
\]
We have $\Fractional{\chi}(G) = 1$ if and only if $G$ is empty, in which case $\Fractional{t}(G) = \infty$. If $G$ is nonempty, then $\Fractional{\chi}(G) \ge 2$, with equality if and only if $G$ is bipartite. The hypergraph complementation theorem then yields
\[
	\frac{ 1 } { \Fractional{t}(G) } + \frac{ 1 } { \Fractional{\chi}(G) } = 1.
\]


Let us give some properties of the $\Fractional{t}(G) = \frac{\Fractional{\chi}(G)}{\Fractional{\chi}(G) - 1}$ quantity.
\begin{description}
    \item[Bounds]

    Let $\alpha(G)$ denote the independence number of $G$, $\omega(G)$ denote its clique number and $\chi(G)$ denote its chromatic number. Then the bounds on $\Fractional{\chi}(G)$ in \cite[Chapter 3]{SU97} and \cite{Lov75}, given on the left hand side below, immediately translate to bounds on $\Fractional{t}(G)$, given on the right hand side below. 
\begin{alignat*}{3}
    \Fractional{\chi}(G) &\ge \frac{ n }{ \alpha(G) } &\qquad \longrightarrow \qquad \Fractional{t}(G) &\le \frac{n}{n - \alpha(G)}, \\ 
	\Fractional{\chi}(G) &\ge \frac{\chi(G)}{1 + \ln \alpha(G)} &\qquad  \longrightarrow\qquad \Fractional{t}(G) &\le \frac{\chi(G)}{\chi(G) - 1 - \ln \alpha(G)},\\
    \Fractional{\chi}(G) &\ge \omega(G) &\qquad \longrightarrow \qquad \Fractional{t}(G) &\le \frac{\omega(G)}{\omega(G) - 1},\\
    \Fractional{\chi}(G) &\le \chi(G)  &\qquad \longrightarrow \qquad  \Fractional{t}(G) &\ge \frac{\chi(G)}{\chi(G) - 1}.
\end{alignat*}

    \item[Possible values]
    If $G$ is non-empty, then $\Fractional{t}(G)$ is a rational number in $(1, 2]$. Conversely, for any rational number $q \in (1,2]$, there is $G$ with $\Fractional{t}(G) = q$ (since $\Fractional{\chi}(K(n,r)) = n/r$ for the Kneser graph with $n \ge 2r$ (see e.g. \cite{SU97})).
    
    \item[Complexity]
    Again, complexity results for $\Fractional{\chi}(G)$ can be converted into complexity results for $\Fractional{t}(G)$. Thus, for any $1 < s < 2$, determining whether $\Fractional{t}(G) \ge s$ is NP-complete (an immediate consequence of \cite{GLS81}). On the other hand, $\Fractional{t}(G)$ can be computed in polynomial time if $G$ is a line graph (see \cite[Section 4.5]{SU97}), or if $G$ is perfect (since the chromatic and fractional chromatic numbers coincide in that case). 
\end{description}


\subsection{Vertex cover with finite budget}

The \textsc{Vertex Cover with Budget (VSC)} problem is defined as follows. Let $G$ be a graph and $b$ a positive integer. For any family of $t$ vertex covers $S = \{S_1, \dots, S_t\}$ of $G$, we refer to the budget of $S$ as the maximum number of times a particular vertex appears in $S$:
\[
	\max\{ |\{ i : v \in S_i\} | : v \in V\}.
\]
For any $b \ge 1$, we denote the cardinality of the largest family of vertex covers with budget at most $b$ as $t_b(G)$. The problem is, given $G$ and $b$, to determine $t_b(G)$.

A \Define{$b$-fold matching} of a hypergraph $H$ is a set of edges of $H$ such that every vertex is contained in at most $b$ edges (so that a matching is a $1$-fold matching). The maximum size of a $b$-fold matching is denoted as $\mu_b(H)$. We immediately obtain that $t_b(G) = \mu_b( H_{VC}(G) )$.  Similarly, a $c$-fold covering is a set of edges of $H$ such that every vertex is contained in at least $c$ edges. The smallest size of a $c$-covering of $H$ is denoted as $k_c(H)$. We then have
\begin{align*}
    \Fractional{\mu}(H) &= \lim_{b \to \infty} \frac{ \mu_b(H) }{ b } = \max_{b \in \N} \frac{ \mu_b(H) }{ b },\\
    \Fractional{k}(H) &= \lim_{c \to \infty} \frac{ k_c(H) }{ c } = \max_{c \in \N} \frac{ k_c(H) }{ c }.
\end{align*}
Moreover, there exist $\beta$ and $\gamma$ such that $\mu_{l\beta} = l \beta \Fractional{\mu}(H)$ and $k_{l\gamma} = l \gamma \Fractional{k}(H)$ for all $l \in \N$. Therefore, $\Fractional{t}(G)$ is the limit of the time-per-budget ratio $t_b(G)/b$.


\begin{proposition}
For any $G$, 
\[
	\Fractional{t}(G) = \lim_{b \to \infty} \frac{t_b(G)}{b} = \max_{b \to \infty} \frac{t_b(G)}{b}.
\]
Moreover, there exists $\beta \in \N$ such that $t_{l \beta}(G) = \Fractional{t}(G) \cdot l \beta$ for all $l \in \mathbb{N}$.
\end{proposition}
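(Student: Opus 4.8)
The plan is to exploit the correspondence already noted in the text: $\kappa_f(G) = \Opt(P)$ where $P = M(H_{VC}(G))$ is the fractional matching LP of the vertex cover hypergraph, and $T_G(b) = \Opt(P^I_b)$, where $P^I_b$ is the integer program obtained by scaling the right-hand side of $P$ by $b$ and imposing integrality. Since $H_{VC}(G)$ is nontrivial for nonempty $G$ (and the case of empty $G$ is trivial since then every vertex cover uses no vertices and $\kappa_f(G)$ should be read as $+\infty$, so I will assume $G$ nonempty), Theorem~\ref{th:chromatic} gives $\kappa_f(G) = \frac{\chi_f(G)}{\chi_f(G)-1}$, a rational number; I will mostly just need that $\kappa_f(G)$ is a finite rational. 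The first step is to establish the ``$\ge$'' direction of the limit/max statement, namely $\frac{T_G(b)}{b} \le \kappa_f(G)$ for every $b$: any family $S_1,\dots,S_t$ of vertex covers with budget at most $b$ yields, by setting $y_i = 1/b$ for each cover used (with multiplicity), a feasible solution of $P$ of value $t/b$, so $t/b \le \Opt(P) = \kappa_f(G)$. This already gives $\max_b T_G(b)/b \le \kappa_f(G)$ and hence $\limsup_b T_G(b)/b \le \kappa_f(G)$.

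The second step is the matching lower bound, and this is where the ``moreover'' clause does the work. Let $y^\ast$ be an optimal solution of $P$; it is rational, so there is $\beta \in \N$ with $\beta y^\ast$ integral and $\beta y^\ast \ge 0$. Then $\beta y^\ast$ is a feasible integral solution of $P^I_\beta$: for each vertex cover $S$ of $G$ it tells us to take $S$ with multiplicity $(\beta y^\ast)_S$, and the constraint $\sum_{S \ni v} (\beta y^\ast)_S \le \beta$ says exactly that the resulting family has budget at most $\beta$; its cardinality is $\sum_S (\beta y^\ast)_S = \beta \cdot 1^\top y^\ast = \beta \kappa_f(G)$, which by the first step is also the maximum, so $T_G(\beta) = \kappa_f(G)\beta$. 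More generally, $k \beta y^\ast$ is integral and feasible for $P^I_{k\beta}$ with value $k\beta \kappa_f(G)$, and again optimal by step one, giving $T_G(k\beta) = \kappa_f(G) \cdot k\beta$ for all $k \in \N$. This proves the ``moreover'' statement, and along the subsequence $b = k\beta$ we get $T_G(b)/b = \kappa_f(G)$, so $\max_b T_G(b)/b = \kappa_f(G)$ and $\limsup_b T_G(b)/b \ge \kappa_f(G)$.

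The third step is to upgrade the limsup along a subsequence to an honest limit. The idea is monotonicity/superadditivity of $b \mapsto T_G(b)$: a family with budget $\le b_1$ and a family with budget $\le b_2$ can be concatenated to a family with budget $\le b_1 + b_2$, so $T_G(b_1 + b_2) \ge T_G(b_1) + T_G(b_2)$; combined with $T_G(b) \le \kappa_f(G) b$ from step one, Fekete's lemma gives that $T_G(b)/b$ converges, and since it converges to its supremum which by step two equals $\kappa_f(G)$, we are done. Alternatively, and more concretely, write $b = k\beta + r$ with $0 \le r < \beta$; then $T_G(b) \ge T_G(k\beta) = \kappa_f(G) k \beta \ge \kappa_f(G) b - \kappa_f(G)\beta$, so $\kappa_f(G) - \kappa_f(G)\beta/b \le T_G(b)/b \le \kappa_f(G)$, and letting $b \to \infty$ forces the limit to be $\kappa_f(G)$. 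I would present this elementary sandwiching argument rather than invoking Fekete, as it is self-contained.

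The main obstacle is essentially bookkeeping rather than a genuine difficulty: one must be careful that $P^I_b$ really does encode ``families of vertex covers with budget $\le b$'' — in particular that the variables are the multiplicities of vertex covers (so a cover may be repeated), that the objective counts covers with multiplicity, and that ``budget'' is the per-vertex total over the family, which matches $\sum_{S \ni v} y_S \le b$. One should also note the degenerate cases (empty $G$, or $G$ with an isolated vertex — but an isolated vertex imposes no constraint so this is harmless) and confirm $\kappa_f(G)$ is finite and rational via Theorem~\ref{th:chromatic}, which is needed for the existence of $\beta$. With those points checked, the three steps above close the argument with no serious calculation.
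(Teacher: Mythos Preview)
The paper does not actually include a proof of this proposition; it is stated without proof, presumably because it is the standard LP/IP relationship already set up in the preceding paragraph. Your argument is correct and is precisely the expected one: the LP relaxation gives $T_G(b)/b \le \kappa_f(G)$ for every $b$; rationality of an optimal LP solution furnishes a denominator-clearing $\beta$ with $T_G(k\beta) = \kappa_f(G)\,k\beta$; and monotonicity (or superadditivity) of $T_G$ together with the sandwich $\kappa_f(G) - \kappa_f(G)\beta/b \le T_G(b)/b \le \kappa_f(G)$ forces the limit. Nothing is missing, and your handling of the degenerate empty-graph case is appropriate.
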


We now obtain more precise results about $t_b(G)$.

\begin{proposition}
For any $G$ and any $b$, we have
\[
	 \left\lfloor \frac{\chi(G)}{\chi(G) - 1} \cdot b \right\rfloor
	 \le t_b(G) \le
	 \left\lfloor \frac{\omega(G)}{\omega(G) - 1} \cdot b \right\rfloor
\]
\end{proposition}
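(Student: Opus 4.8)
The plan is to establish the two bounds separately, in each case exhibiting an explicit family of vertex covers for the lower bound and using an averaging/counting argument for the upper bound. For the lower bound, I would use the ordinary chromatic number: fix a proper colouring of $G$ with $\chi := \chi(G)$ colour classes $C_1, \dots, C_\chi$. Each complement $V \setminus C_j$ is a vertex cover (since $C_j$ is independent), and moreover every vertex $v$ lies in exactly $\chi - 1$ of these $\chi$ covers. So taking each of the covers $V \setminus C_1, \dots, V \setminus C_\chi$ with multiplicity $m$ gives a family of $\chi m$ vertex covers with budget exactly $(\chi-1)m$. Given a budget $b$, write $b = q(\chi-1) + s$ with $0 \le s < \chi - 1$; the plan is to take $q$ full copies of the cyclic family (contributing $q\chi$ covers at budget $q(\chi-1)$) and then squeeze out a few more covers from the remaining slack $s$ on each vertex. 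The bookkeeping here is the main obstacle: one must check that with $s$ extra appearances allowed per vertex, one can add exactly $\lfloor \chi s/(\chi-1)\rfloor$ more covers of the form $V\setminus C_j$, so that the total is $q\chi + \lfloor \chi s/(\chi-1)\rfloor = \lfloor \chi b/(\chi - 1)\rfloor$; this is a small but slightly delicate integer-arithmetic verification. An alternative, cleaner route to the lower bound is to observe that $T_G(b) \ge T_{G}'(b)$ where $G'$ is obtained by restricting to covers of the form $V \setminus C_j$, and that the relevant optimum over such covers is governed by an integer program on the $\chi$-cycle structure whose value is exactly $\lfloor \chi b /(\chi-1)\rfloor$; either way the core fact is that $\chi$ colour classes cover every vertex $\chi-1$ times.

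For the upper bound, I would argue by double counting with the clique number $\omega := \omega(G)$. Fix a clique $Q$ of size $\omega$ in $G$. Any vertex cover $S$ of $G$ must contain all but at most one vertex of $Q$ (since $V \setminus S$ is independent and hence meets $Q$ in at most one vertex), so $|S \cap Q| \ge \omega - 1$. If $S_1, \dots, S_t$ is a family of vertex covers with budget at most $b$, then summing over the clique,
\[
	t(\omega - 1) \le \sum_{i=1}^{t} |S_i \cap Q| = \sum_{v \in Q} |\{ i : v \in S_i \}| \le \omega b,
\]
so $t \le \omega b/(\omega - 1)$, and since $t$ is an integer, $t \le \lfloor \omega b/(\omega - 1)\rfloor$. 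This direction is essentially immediate; the only thing to note is that it reuses the standard inequality $\chi_f(G) \le \omega(G)/(\omega(G)-1)$ is \emph{not} what is needed — we need the sharper per-instance statement about finitely many covers, which the direct clique counting gives.

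The expected main obstacle is purely the floor-function arithmetic in the lower bound: showing that the greedy construction using $q$ periods plus a partial period realises exactly $\lfloor \chi b/(\chi-1)\rfloor$ covers rather than one fewer. I would handle this by proving the clean identity $q\chi + \lfloor \chi s/(\chi - 1) \rfloor = \lfloor \chi(q(\chi-1)+s)/(\chi-1)\rfloor$ for $0 \le s \le \chi - 2$, which follows because $q\chi$ is an integer and can be pulled out of the floor. It is also worth remarking, as a sanity check, that both bounds are consistent with the asymptotics $T_G(b) \sim \kappa_f(G)\, b = \frac{\chi_f(G)}{\chi_f(G)-1}\,b$ from the preceding proposition, since $\chi(G)/(\chi(G)-1) \le \chi_f(G)/(\chi_f(G)-1) \le \omega(G)/(\omega(G)-1)$ by monotonicity of $x \mapsto x/(x-1)$ together with $\omega(G) \le \chi_f(G) \le \chi(G)$.
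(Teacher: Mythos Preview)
Your argument is correct. Both bounds go through exactly as you outline: the clique double-count gives $t(\omega-1)\le \omega b$ for the upper bound, and for the lower bound the family $\{V\setminus C_j\}$ from a proper $\chi$-colouring yields $q\chi+s$ covers within budget $b=q(\chi-1)+s$, which equals $\lfloor \chi b/(\chi-1)\rfloor$ since $\lfloor \chi s/(\chi-1)\rfloor=s$ for $0\le s\le\chi-2$. (The ``delicate'' floor arithmetic you flag is in fact immediate once you note this last identity; picking any $s$ of the $\chi$ covers for the partial period already keeps every vertex within budget.)

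The paper takes a different, more structural route: it proves the monotonicity $G'\to G \Rightarrow T_G(b)\le T_{G'}(b)$ under graph homomorphism, sandwiches $G$ via $K_{\omega(G)}\to G\to K_{\chi(G)}$, and then computes $T_{K_n}(b)=\lfloor nb/(n-1)\rfloor$ directly. Your two arguments are precisely what this homomorphism proof unpacks to in the two extreme cases --- the clique embedding $K_\omega\hookrightarrow G$ for the upper bound and the colouring map $G\to K_\chi$ for the lower --- so the content is the same, but the paper's packaging isolates a reusable lemma (homomorphism monotonicity of $T_G(b)$) and reduces both inequalities to a single computation on complete graphs. Your version is more self-contained and avoids introducing the homomorphism language; the paper's version is shorter and makes clear that any homomorphism bound on $\chi$ or $\omega$ would plug in the same way.
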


\begin{proof}
If there is a homomorphism from $G'$ to $G$, which we denote as $G' \to G$, then $t_b(G) \le t_b(G')$. Since $K_{\omega(G)} \to G \to K_{\chi(G)}$, we obtain $t_b( K_{\chi(G)} ) \le t_b(G) \le t_b( K_{\omega(G)} )$. It is then easy to verify that $t_b( K_n ) = \left\lfloor \frac{n}{n - 1} \cdot b \right\rfloor$ for all $n \ge 1$. Hence the result.
\end{proof}

Since the chromatic number of a perfect graph can be computed in polynomial time \cite{GLS81}, we obtain the following

\begin{corollary}
If $G$ is a perfect graph, then for any $b$, $t_b(G) = \left\lfloor \frac{ \chi(G) }{ \chi(G) - 1 } \cdot b \right\rfloor$ can be computed in polynomial time.
\end{corollary}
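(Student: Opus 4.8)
The plan is to show that for a perfect graph the two floor bounds of the preceding proposition collapse to a single, efficiently computable value. Recall that $G$ being perfect entails in particular $\omega(G) = \chi(G)$. Substituting this equality into
\[
	\left\lfloor \frac{\chi(G)}{\chi(G) - 1} \cdot b \right\rfloor \le T_G(b) \le \left\lfloor \frac{\omega(G)}{\omega(G) - 1} \cdot b \right\rfloor
\]
the lower and upper bounds coincide, so that $T_G(b) = \left\lfloor \frac{\chi(G)}{\chi(G) - 1} \cdot b \right\rfloor$ exactly. (Here we use that $G$ is nonempty, so $\chi(G) \ge 2$ and the expression is well defined; the edgeless case makes $H_{VC}(G)$ trivial and is of no interest.)

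It then remains to observe that $\chi(G)$ is computable in polynomial time when $G$ is perfect. This is the theorem of Grötschel, Lovász and Schrijver \cite{GLS81}: on perfect graphs the clique number, the chromatic number, and the fractional chromatic number all coincide with the Lovász theta number $\vartheta(\Complement{G})$, which is computable in polynomial time via the ellipsoid method. (Equivalently, this is precisely the fact, invoked earlier in this section, that $\kappa_f(G) = \chi_f(G)/(\chi_f(G)-1)$ is polynomial-time computable on perfect graphs, combined with $\chi_f(G) = \chi(G)$.) Once $\chi(G) \le n$ is known, evaluating $\left\lfloor \frac{\chi(G)}{\chi(G) - 1} \cdot b \right\rfloor$ amounts to a single integer division with remainder, which is polynomial in the input size even if $b$ is encoded in binary.

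I do not expect a genuine obstacle: the argument is entirely assembled from two ready-made ingredients, namely the floor bounds of the preceding proposition (already established) and the polynomial-time solvability of $\chi$ on perfect graphs \cite{GLS81}. The only point meriting a line of care is the degenerate edgeless case, which is ruled out as above, and making sure the statement is read with $b$ part of the input so that "polynomial time" refers to the combined size of $G$ and $b$.
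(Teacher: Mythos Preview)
Your proposal is correct and is exactly the argument the paper intends: the corollary is stated without proof immediately after the floor-bound proposition, and the implicit reasoning is precisely that $\omega(G)=\chi(G)$ for perfect $G$ collapses the two bounds, after which one invokes \cite{GLS81} to compute $\chi(G)$ in polynomial time.
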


The highest time $t_b(G)$ is only achieved for bipartite graphs, as seen below.

\begin{proposition} \label{proposition:bipartite}
The following are equivalent.
\begin{enumerate}[label=(\alph*)]
	\item \label{item:a} 
	$\Fractional{t}(G) = 2$.
	
	\item \label{item:b}
	$t_b(G) = 2b$ for some $b \ge 1$.
	
	\item \label{item:c}
	$t_b(G) = 2b$ for all $b \ge 1$.
	
	\item \label{item:d}
	$G$ is bipartite.
\end{enumerate}
\end{proposition}

\begin{proof}
We have \ref{item:d} $\implies$ \ref{item:c} $\implies$ \ref{item:b} $\implies$ \ref{item:a}. Conversely, $\Fractional{t}(G) = 2$ if and only if $\Fractional{\chi}(G) = 2$, which in turn is equivalent to $G$ being bipartite.
\end{proof}


We obtain a final result on the computational complexity of decision problems related to $t_b(G)$.

\begin{theorem} \label{theorem:mub_kc}
For any $b, c \ge 1$ and any hypergraph $H$,
\[
    \mu_b(H) \ge b + c \iff k_c( \HComplement{H} ) \le b + c.
\]
\end{theorem}

\begin{proof}
It is easy to verify that each statement is equivalent to the next, in the following sequence:
\begin{itemize}
	\item $\mu_b(H) \ge b+c$.
	
	\item There exist $b+c$ edges of $H$, say $e_1, \dots, e_{b+c}$, such that for any $v \in V$, $|\{i : v \in e_i\}| \le b$.

	\item There exist $b+c$ edges of $\HComplement{H}$, say $f_1, \dots, f_{b+c}$, such that for any $v \in V$, $|\{i : v \in f_i\}| \ge c$.

	\item $k_c(H) \le b + c$.
\end{itemize} 
\end{proof}



A $c$-multicolouring of a graph $G$ is a colouring of its vertices, such that each vertex is assigned a set of $c$ distinct colours, and where the sets of colours of any two adjacent vertices are disjoint \cite{BKPSW19}.  For $H = H_{VC}(G)$, we have $\mu_b(H) = t_b(G)$ and $k_c(\HComplement{H}) = \chi_c(G)$, the smallest number of colours in a $c$-multicolouring of $G$. It follows from Theorem \ref{theorem:mub_kc} that for any $b \ge 1$ and $c \ge 1$, $t_b(G) \ge b + c$ if and only if $\chi_c(G) \le b + c$. For $c=b$, as proved in Proposition \ref{proposition:bipartite}, deciding whether $t_b(G) = 2b$ can be done in polynomial time. On the other hand, since for any $c$ and any $a > 2c$, deciding whether a graph $G$ satisfies $\chi_c(G) \le a$ is NP-complete (see \cite[Section 3.9]{SU97}), we obtain the following corollary.

\begin{corollary}
For any $b \ge 2$ and any $1 \le c \le b - 1$, it is NP-complete to decide whether $t_b(G) \ge b + c$.
\end{corollary}

\end{document}